\algrenewcommand\algorithmicrequire{\textbf{Precondition:}}
\algrenewcommand\algorithmicensure{\textbf{Postcondition:}}
\newtheorem{prop}{Proposition}
\newtheorem{defn}{Definition}
\newcommand*{\V}{\ensuremath{\mathcal{V}}}
\newcommand*{\E}{\ensuremath{\mathcal{E}}}
\newcommand*{\Hstar}{\ensuremath{\mathcal{H}}^*}
\newcommand*{\RHstar}{\ensuremath{\mathcal{RH}}^*}
\newcommand*{\PRHstar}{\ensuremath{\mathcal{PRH}}^*}
\newcommand*{\X}{\ensuremath{\mathcal{X}}}
\newcommand*{\C}{\ensuremath{\mathcal{C}}}
\newcommand*{\N}{\ensuremath{\mathcal{N}}}
\newcommand*{\T}{\ensuremath{\mathcal{T}}}
\DeclareMathOperator*{\argmin}{arg\,min}
\def\BibTeX{{\rm B\kern-.05em{\sc i\kern-.025em b}\kern-.08em T\kern-.1667em\lower.7ex\hbox{E}\kern-.125emX}}
\begin{document}
\addtolength{\abovedisplayskip}{-.05cm}
\addtolength{\belowdisplayskip}{-.05cm}
\addtolength{\textfloatsep}{-.5cm}

\title{\LARGE \bf Efficient Path Planning with Soft Homology Constraints}

\author{Carlos A. Taveras, Santiago Segarra, and C\'esar A.~Uribe
\thanks{ 
Research was sponsored by the Army Research Office and was accomplished under Grant Number W911NF-17-S-0002. The views and conclusions contained in this document are those of the authors and should not be interpreted as representing the official policies, either expressed or implied, of the Army Research Office or the U.S. Army or the U.S. Government. The U.S. Government is authorized to reproduce and distribute reprints for Government purposes notwithstanding any copyright notation herein.
Part of this material is based upon work supported by the National Science Foundation under Grants \#2211815 and \#2213568 and the Ken Kennedy-HPE Cray 2023/24 Graduate Fellowship.
The authors are with the Dept. of ECE, Rice University. 
Emails:\texttt{\{ctaveras, segarra, cauribe\}@rice.edu}. 
}}
\maketitle

\begin{abstract}
We study the problem of path planning with soft homology constraints on a surface topologically equivalent to a disk with punctures. Specifically, we propose an algorithm, named $\Hstar$, for the efficient computation of a path homologous to a user-provided reference path. 
We show that the algorithm can generate a suite of paths in distinct homology classes, from the overall shortest path to the shortest path homologous to the reference path, ordered both by path length and similarity to the reference path. 
Rollout is shown to improve the results produced by the algorithm.
Experiments demonstrate that $\Hstar$ can be an efficient alternative to optimal methods, especially for configuration spaces with many obstacles. \label{sec:abstract}
\end{abstract}

\section{Introduction} \label{sec:1}
Motion planning algorithms are ubiquitous in robotics,
serving as the bridge between high-level task specifications and the low-level instructions that completion of the task necessitates \cite{LaValle_2006}.
Whether due to the intrinsic properties of a robot or the presence of obstacles in an environment, most real-world configuration spaces in motion planning problems have non-trivial topology \cite{Choset_Lynch_Hutchinson_Kantor_Burgard_2005}.
Paths in a configuration space with non-trivial topology can be partitioned into equivalence classes by topological equivalence relations, namely homotopy and homology.
Topology-constrained path planning has proven to be useful in several robotics applications, including multi-agent coordination \cite{Wu_Bhattacharya_Prorok_2020}, motion planning in dynamic environments \cite{Kolur_Chintalapudi_Boots_Mukadam_2019}, and guided AUV navigation \cite{Hernandez_Carreras_Ridao_2011}.

As previously mentioned, homotopy and homology can define equivalence relations on paths. 
Two paths with identical start and end points are homotopic if they can be continuously deformed into one another. They are said to be homologous if the concatenation of the paths does not enclose any holes.
Homotopy and homology classes are defined as the set of all homotopic and homologous paths, respectively.
The Hurewicz theorem relates these equivalences \cite{hatcher}, implying that homotopic paths are homologous (the converse, however, is false; see \cite[Figure 2]{Bhattacharya_Likhachev_Kumar_2012}).

The fields of computational geometry and topology have long studied topology constrained path planning problems over discrete surfaces.
An important line of work in these fields is that of finding the shortest path between a pair of points belonging to a designated homotopy class.
For example, \cite{HERSHBERGER199463} makes use of the funnel algorithm \cite{chazelle, Lee1984EuclideanSP} to solve this problem for boundary-triangulated 2-manifolds.
Recent work has extended these results to a more general class of combinatorial surfaces \cite{tightening}.
Related work includes solving for shortest homologous cycles \cite{Chambers_Erickson_Nayyeri_2009, optimal_homologous_cycles_lp} and 
computing a minimal homology basis \cite{Dey_Li_Wang_Minimal, Chen_Freedman_2010}. 
In  recent years, the robotics motion planning community has developed various search \cite{Bhattacharya_Likhachev_Kumar_2012, Bhattacharya_Lipsky_Ghrist_Kumar_2013, Demyen, Kuderer_Sprunk_Kretzschmar_Burgard_2014}, sampling \cite{Yi_Goodrich_Seppi_2016, Schmitzberger_Bouchet_Dufaut_Wolf_Husson_2002, Sandstrom_Uwacu_Denny_Amato_2020}, and optimization  
\cite{Kolur_Chintalapudi_Boots_Mukadam_2019} based topology aware motion planning methods.

Despite these developments, however, there is no efficient method for producing shortest paths in homology classes that are similar to a designated class.
Such a method would enable users to produce paths that are similar to the shortest path in the designated class, but with shorter length.
Towards this end, \cite{Bhattacharya_Likhachev_Kumar_2012} introduces the $H$-signature, a homology class invariant, that can be used to create an augmented graph (or road map) over which computing the shortest path in a homology class is trivial.
This method can be inefficient, however, as it often requires the computation of paths belonging to homology classes that are very different from the designated class and of little practical relevance (e.g., paths that loop around holes).

Our method enables users to tailor their search to paths in homology classes similar to the designated class efficiently, as experiments validate.
Other works related to ours include 
\cite{Kolur_Chintalapudi_Boots_Mukadam_2019} and \cite{Kuderer_Sprunk_Kretzschmar_Burgard_2014} which can efficiently construct paths in distinct homotopy classes, however, they do not explicitly aim to minimize the length of the resulting paths as our method and \cite{Bhattacharya_Likhachev_Kumar_2012} do. 

We propose $\Hstar$ an efficient search-based algorithm that can produce a sequence of paths in distinct homology classes for configuration spaces homeomorphic to a disk with holes, ordered by length and the degree of proximity to a user-designated homology class.
We propose a distance metric between homology classes and show that paths that are close in this metric appear topologically similar.
As in \cite{Yi_Goodrich_Seppi_2016}, we only consider paths that do not form complete loops around obstacles/holes.

This paper is organized as follows.
Section \ref{sec:2} establishes the necessary background, notation, and preliminary results.  
In Section \ref{sec:3}, we state the objectives of this paper. In Section \ref{sec:4}, we propose and justify algorithms for solving the objectives. 
In Section \ref{sec:5}, we demonstrate the effectiveness of the proposed algorithms and compare them with the methods proposed in \cite{Bhattacharya_Likhachev_Kumar_2012}.
We conclude in Section \ref{sec:6} with a summary.

\section{Preliminaries} \label{sec:2}

\subsection{Simplicial Complexes}
\label{sec:2A}

\begin{figure*}
    \centering
    \includegraphics[height=4 cm, width= 16.5 cm]{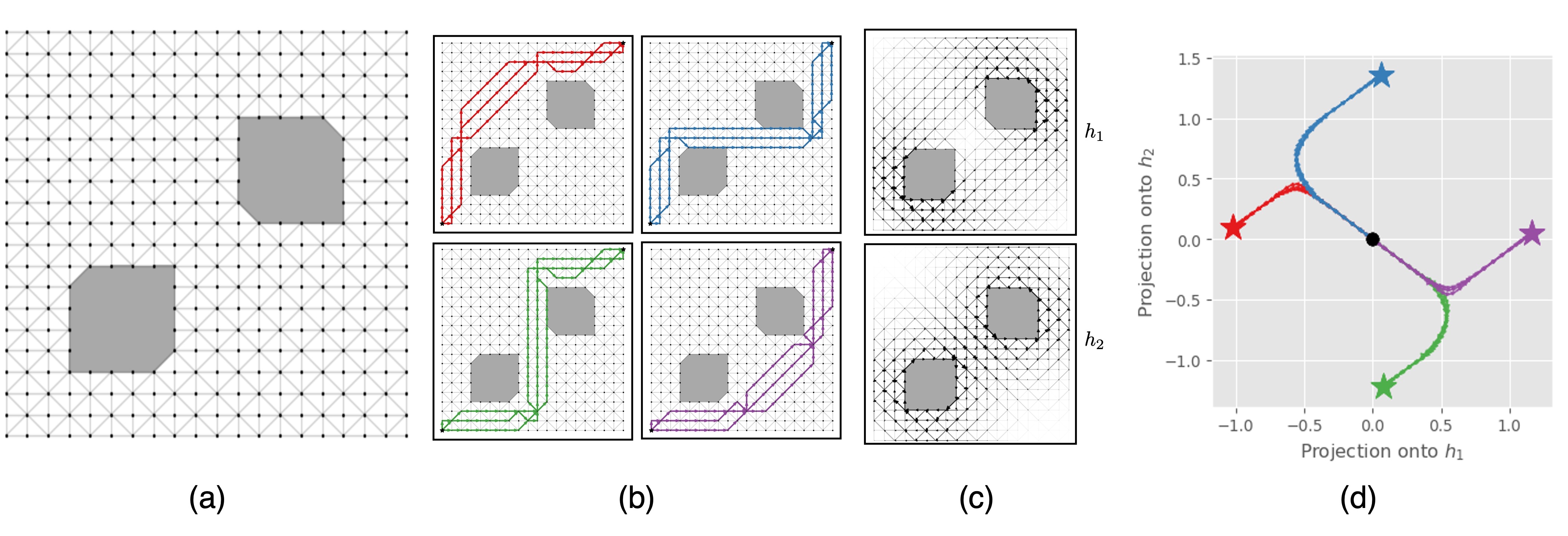}
    \caption{(a) Simplicial Complex with two holes, (b) four distinct homotopy/homology classes with three paths each, (c) harmonic projection basis vectors, and (d) the progression of the harmonic projection for each path in (b).}
    \label{fig:example}
    \vspace{-4 mm}
\end{figure*}
Let $\V = \{v_1, ..., v_N\}$  be a finite set of elements called vertices.
We call a non-empty subset $\sigma \subset \V$ with $|\sigma| = k+1$ a $k$-simplex, and say it has dimension $k$.  
An (abstract) simplicial complex (SC) $\X$ is a collection of simplices that is closed under restriction \cite{ghrist2014elementary}; i.e., if $\sigma \in \X$ and $\pi \subseteq \sigma$ then $\pi \in \X$. 
The SCs in this paper are assumed to be discretized (manifold) surfaces \cite{crane} and are thus comprised of $\{0, 1, 2\}$-simplices which we call nodes, edges and triangles, respectively.
Denoting the sets of edges and triangles respectively as $\E, \text{ and } \T$, we can express an SC as $\X = (\V, \E, \T)$.
Each edge $e \in \E$ is assigned a weight by a function $w$ such that $w(e) > 0$. 

We assign an orientation to all nodes, edges and triangles.
Nodes have positive orientation by convention.
The canonical positive orientation of an edge/triangle is that in which all nodes are in label order (e.g., $[v_1, v_2, v_3]$).
A pair of oriented edges/triangles are equivalent if they differ by an even permutation (e.g., $[v_1, v_2, v_3] = [v_3, v_1, v_2]$).
An oriented edge/triangle can be negated by any odd permutation of its elements (e.g., $-[v_1, v_2] = [v_2, v_1]$). 
Orientation and sign can be extended to simplices of any dimension; we denote the set of oriented $k$-simplices by $\X^{k}$.

The $k$-th chain group $\C_k$ of $\X$ is the vector space over $\mathbb{R}$ generated by a basis formed by the oriented $k$-simplices. 
As $\C_k$ is a finite-dimensional vector space, its elements, called $k$-chains, can be represented as vectors with real coefficients. 
The boundary operator $\partial_k$ is a linear mapping from $\C_k$ to $\C_{k-1}$ which acts on $\sigma \in  \X^{k}$ by
$\partial_k(\sigma) = \sum_{i=0}^{m} (-1)^{i} \sigma^{-i}$
where $\sigma^{-i} = [\sigma_0, ..., \sigma_{i-1},  \sigma_{i+1}, ..., \sigma_m]$ is the $(k-1)$-simplex formed by removing the $i$-th node of $\sigma$. 
As a linear operator $\partial_k$ can be encoded by a matrix.

Recommended texts on simplicial complexes, their applications, and related concepts include \cite{hatcher, nanda21, Edelsbrunner_Harer, ghrist2014elementary}.

\subsection{Paths} 
\label{sec:2B}
\vspace{-2 mm}
Let $v_i, v_f \in \mathcal{V}$ be distinct nodes, which we respectively call the source and destination.
We represent a path as a sequence of edge-connected nodes $\tau = (\tau^{(0)}, ..., \tau^{(n)})$ where $\tau^{(0)} = v_i$ and $\tau^{(n)} = v_f$.
We denote the (weighted) length of $\tau$ by 
\begin{equation}\label{eq:weight}
\vspace{-2 mm}
W(\tau) = \sum_{i=0}^{n-1} w(\tau^{(i)}, \tau^{(i+1)})
\end{equation}
We define simple algebraic operations on paths as follows.
Let $\tau_1=(\tau_1^{(0)},...,\tau_1^{(n)})$ and $\tau_2=(\tau_2^{(0)},...,\tau_2^{(m)})$ be paths.
Addition is defined by path concatenation $\tau_1 + \tau_2 = (\tau_1^{(0)}, ..., \tau_1^{(n)}, \tau_2^{(1)}, ..., \tau_2^{(m)})$ given that $\tau_1^{(n)} = \tau_2^{(0)}$. 
Negation is defined as path reversal $-\tau_1 = (\tau_1^{(n)}, ..., \tau_1^{(0)})$ and subtraction by $\tau_1 - \tau_2 = \tau_1 + (-\tau_2)$.
The sum of a path and a node is $\tau_1 + (v) = \tau_1 + (\tau_1^{(n)}, v)$ given that $(\tau_1^{(n)}, v) \in \mathcal{E}$.

A path from $v_i$ to $v_f$ can also be represented as a $1$-chain $x \in C_1$ satisfying $\partial_1 (x) = -v_i + v_f$.
The map $\Phi$ can transform a path $\tau$ to $x \in C_1$ 
\begin{equation}\label{eq:seqtochain}
x = \Phi(\tau)=\sum_{i=0}^{n-1} \phi(\tau^{(i)}, \tau^{(i+1)})
\end{equation}
where $\phi(v_i, v_j) = [v_i, v_j]$ if $i < j$, and $\phi(v_i, v_j) = -[v_j, v_i]$, otherwise.

\begin{defn}(Homologous paths)
Paths $\tau_1$ and $\tau_2$ connecting points $v_i, v_f$ are \textit{homologous}, denoted $\tau_1 \sim \tau_2$, if the difference of their $1$-chain representations $\Phi(\tau_1) - \Phi(\tau_2)$ is a $2$-chain boundary (i.e. $\Phi(\tau_1) - \Phi(\tau_2) \in \text{im}(\partial_2)$).
\end{defn}
It can be shown that $\sim$ is an equivalence relation on paths with fixed endpoints. 
We refer to an equivalence class of homologous paths as a homology class.

\subsection{Hodge Theory, Homology and Harmonics} \label{sec:2C}

The composition of consecutive boundary maps is null, that is, for all $x \in \C_{k+1}$, $\partial_{k} \circ \partial_{k+1}(x) = 0$~\cite{nanda21}.
As a consequence, we can define the quotient vector space $H_k = \text{ker}(\partial_k) / \text{im}(\partial_{k+1})$, called the $k$-th homology group, where $\text{im}(\cdot)$ and $\text{ker}(\cdot)$ respectively denote the image and kernel of an operator.
Moreover, the Hodge $k$-Laplacian can be defined using the boundary operator as
\begin{equation} \label{eq:laplacian}
L_k = \partial_k^{\top} \partial_k + \partial_{k+1} \partial_{k+1}^{\top}.
\end{equation}
The Hodge Decomposition Theorem \cite{Grady_Polimeni_2010} states that $C_k$ can be decomposed into three orthogonal subspaces
\begin{equation} \label{eq:hodge}    
C_k \cong \text{im}(\partial_{k+1}) \oplus \text{im}(\partial_{k}^{\top}) \oplus \text{ker}(L_k)
\end{equation}
where $\oplus$ denotes the direct sum between a pair of subspaces.

Discrete Hodge Theory implies that $H_k$ and $\text{ker}(L_k)$ are isomorphic \cite{Lim_2019}, allowing for an algebraic condition to determine whether a pair of paths are homologous.
Toward this, we construct a matrix $H = [h_1 ... h_D]$, where $D$ is the number of holes in the SC, whose columns span $\text{ker}(L_1)$ and define an operator which we call the harmonic projection of a path $\tau$
\begin{equation} \label{eq:proj}
\gamma(\tau) = H^\top \Phi(\tau).
\end{equation}

\begin{prop} \label{prop:1}
Two paths $\tau_1$ and $\tau_2$ connecting the same points are homologous if and only if $\gamma(\tau_1) = \gamma(\tau_2)$.
\end{prop}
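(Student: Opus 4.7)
The plan is to translate the problem into linear algebra on the chain space $C_1$ and then apply the Hodge Decomposition \eqref{eq:hodge} together with the isomorphism $H_1\cong \ker(L_1)$ noted right before the definition of $\gamma$. Set $x_i=\Phi(\tau_i)\in C_1$ for $i=1,2$. Since both paths share endpoints $v_i,v_f$, we have $\partial_1 x_1=\partial_1 x_2=-v_i+v_f$, so the difference $d:=x_1-x_2$ lies in $\ker(\partial_1)$. The whole proposition then reduces to showing that $d\in\operatorname{im}(\partial_2)$ if and only if $H^\top d=0$.

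For the forward implication, assume $\tau_1\sim\tau_2$, so $d\in\operatorname{im}(\partial_2)$. The Hodge decomposition \eqref{eq:hodge} gives $C_1=\operatorname{im}(\partial_2)\oplus\operatorname{im}(\partial_1^\top)\oplus\ker(L_1)$ as an orthogonal direct sum, hence $\operatorname{im}(\partial_2)\perp\ker(L_1)$. Since the columns of $H$ span $\ker(L_1)$, each column is orthogonal to $d$, giving $H^\top d=0$, that is $\gamma(\tau_1)=\gamma(\tau_2)$.

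For the converse, suppose $H^\top d=0$. Because the columns of $H$ form a basis of $\ker(L_1)$, this says $d\perp \ker(L_1)$, so by \eqref{eq:hodge} we may write $d=\partial_2 a+\partial_1^\top b$ for some $a\in C_2$, $b\in C_0$, with no harmonic component. I would then use the cycle condition $\partial_1 d=0$: applying $\partial_1$ kills $\partial_2 a$ (since $\partial_1\partial_2=0$), leaving $\partial_1\partial_1^\top b=0$; taking the inner product with $b$ yields $\|\partial_1^\top b\|^2=0$, so $\partial_1^\top b=0$ and therefore $d=\partial_2 a\in\operatorname{im}(\partial_2)$. By definition this means $\tau_1\sim\tau_2$.

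The only subtle step is the converse, where one has to argue that $\ker(\partial_1)\cap\operatorname{im}(\partial_1^\top)=\{0\}$; this is really the standard fact that in the Hodge decomposition $\ker(\partial_1)=\operatorname{im}(\partial_2)\oplus\ker(L_1)$, which I would either cite from the Hodge theory reference or derive inline by the short $\|\partial_1^\top b\|^2=b^\top\partial_1\partial_1^\top b$ computation sketched above. Everything else is a direct bookkeeping of what $\gamma$ encodes, so no further obstacles are expected.
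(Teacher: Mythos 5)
Your proof is correct and follows essentially the same route as the paper: both directions rest on the orthogonality of the Hodge decomposition \eqref{eq:hodge} and on the fact that the difference of the two chains is a $1$-cycle, forcing the $\operatorname{im}(\partial_1^\top)$ component to vanish. The only cosmetic difference is that you establish $\ker(\partial_1)\cap\operatorname{im}(\partial_1^\top)=\{0\}$ by the explicit $\lVert\partial_1^\top b\rVert^2$ computation, whereas the paper invokes the fundamental theorem of linear algebra.
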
 
\begin{proof}
Let $x_1 = \Phi(\tau_1)$ and $x_2 = \Phi(\tau_2)$.

Suppose $\tau_1 \sim \tau_2$.
Therefore, $x_1 - x_2 \in \text{im}(\partial_2)$.
As $\text{ker}(L_1) = \text{im}(H)$ and $\text{im}(\partial_2) \cap \text{ker}(L_1) = \{0\}$ by Eq. \eqref{eq:hodge}, the fundamental theorem of linear algebra \cite{strang} implies $x_1 - x_2 \in \ker(H^\top)$, or $H^\top (x_1 - x_2) = 0$.
Equivalently $H^\top x_1 = \gamma(\tau_1)  = \gamma(\tau_2) =  H^\top x_2$.

Suppose now that $\gamma(\tau_1) = H^\top x_1 = H^\top x_2 = \gamma(\tau_2)$.
Hence, $x_1 - x_2 \in \text{ker}(H^\top) = \text{im}(H)^\perp$.
By construction $\text{im}(H) = \text{ker}(L_1)$ which by Eq. \eqref{eq:hodge} implies that $x_1 - x_2 \in \text{ker}(L_1)^\perp = \text{im}(\partial_2) \oplus \text{im}(\partial_1^\top)$.
As $\tau_1$ and $\tau_2$ are connected by the same points $x_1 - x_2 \in \text{ker}(\partial_1) = \text{im}(\partial_1^\top)^\perp$.
As $\text{im}(\partial_1^\top)^\perp \cap \text{im}(\partial_1^\top) = \{0\}$, we have $x_1 - x_2 \in \text{im}(\partial_2)$ thus $\tau_1 \sim \tau_2$. 
\end{proof}

Works that leverage harmonic 1-(co)chains to measure topological similarity between paths include \cite{random_walks, Frantzen_outliers, ghosh_2018, Schaub_higher_order, yin, roddenberry2019hodgenet, Roddenberry_Glaze_Segarra, jiagraph2019}.
Proposition \ref{prop:1} is analogous to \cite[Lemma 2]{Bhattacharya_Likhachev_Kumar_2012} as the holomorphic functions in $H$-signature are harmonic \cite{crane}.

\section{Problem Statement} \label{sec:3}
Let $\X = (\mathcal{V}, {\mathcal{E}}, {\mathcal{T}})$ be an oriented 2-dimensional simplicial complex topologically equivalent to a disk with $D$ holes.
Let $v_i, v_f \in \mathcal{V}$ respectively denote the source and destination nodes.
Let $\bar{\tau} = (\bar{\tau}^{(0)}, ..., \bar{\tau}^{(n)})$ be a path called the reference path with harmonic projection $\bar{\gamma} = \gamma(\bar{\tau})$.
We aim to efficiently compute a path $\tau^*$ homologous to $\bar{\tau}$ with minimal length; in other words, we want to solve
\begin{equation*}
\begin{aligned}   
\tau^* = \argmin_{\tau \in \mathcal{P}} \quad & W(\tau) \hspace{2 mm}
\textrm{s.t.} \hspace{2 mm}\gamma(\tau) = \gamma(\bar{\tau}).
\end{aligned}
\tag{P1}\label{P1}
\end{equation*}
where $\mathcal{P}$ is the set of all edge-connected paths with source $v_i$ and destination $v_f$.

A search-based algorithm to solve \ref{P1} would require the ability to compute several paths connecting $v_i$ to $v_f$ that are the shortest in their respective class.
The method proposed in \cite{Bhattacharya_Likhachev_Kumar_2012} enables this by lifting the problem to an augmentation of the original graph in which an augmented node $v' = (v, H(v))$ consists of both a node $v \in \V$ and an $H$-signature (harmonic projection) $H(v) \in \mathbb{C}^{D}$. 
A $\tau^*$ can thus be found by solving for a path connecting $v_i' = (v_i, 0)$ to $v_f' = (v_f, \bar{\gamma})$ in the augmented graph using $A^*$ \cite{astar}.

Depending on the length of $\tau^*$, however, this algorithm can require constructing extremely large graphs, which we would like to avoid, so we restrict our focus to solutions in the original problem space. 

To bypass the homology constraint, we soften it, introducing a penalty to the cost function, as one does in Lagrangian-based optimization \cite{bertsekas2014constrained}. 
Let 
\begin{equation} \label{eq:proj_diff}
\Delta \gamma(\tau_1, \tau_2) = \lVert \gamma(\tau_1) - \gamma(\tau_2) \rVert_2    
\end{equation}
denote the harmonic projection difference between $\tau_1$ and $\tau_2$, where $\lVert \cdot \rVert_2$ is the Euclidean norm.
We reformulate \ref{P1} as
\begin{equation*}
\begin{aligned}   
\min_{\tau \in \mathcal{P}} \quad & C^\alpha(\tau, \bar{\tau}).
\end{aligned}
\tag{P2}\label{P2}
\end{equation*}
where $C^\alpha(\tau, \bar{\tau}) = W(\tau) + \alpha \Delta \gamma(\tau, \bar{\tau})$ and $\alpha > 0$.
This reformulation cannot be solved optimally on the SC using dynamic programming-based (DP) methods as the cost function lacks the optimal substructure that characterizes DP problems \cite{Bertsekas_2012}.
Therefore, we develop a heuristic function to approximate a solution to \ref{P2} that leverages the sequential structure of the harmonic projection (as seen in Fig. \ref{fig:example}(d)) and use rollout to improve results produced by the heuristic.
It can be shown that for sufficiently large $\alpha$, the minimizers for \ref{P1} and \ref{P2} are equivalent.

\begin{prop} \label{prop:2}
There exists a sufficiently large $\alpha>0$ in \ref{P2}, such that a minimizer of \ref{P1} is a minimizer of \ref{P2}.
\end{prop}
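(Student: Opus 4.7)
The plan is a standard exact-penalty argument: because any minimizer $\tau^*$ of \ref{P1} is by feasibility homologous to $\bar\tau$, Proposition~\ref{prop:1} gives $\Delta\gamma(\tau^*,\bar\tau)=0$, and hence $C^\alpha(\tau^*,\bar\tau)=W(\tau^*)$ for every $\alpha>0$. The whole proof thus reduces to showing that for $\alpha$ large enough, no $\tau\in\mathcal{P}$ can achieve $C^\alpha(\tau,\bar\tau)<W(\tau^*)$.

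The easy half is to dispose of paths with $W(\tau)\ge W(\tau^*)$: for these, $C^\alpha(\tau,\bar\tau)\ge W(\tau)\ge W(\tau^*)$ for any $\alpha\ge 0$, which in particular covers every path homologous to $\bar\tau$ by the optimality of $\tau^*$ in \ref{P1}. What remain are the paths with $W(\tau)<W(\tau^*)$; each such $\tau$ must lie in a different homology class than $\bar\tau$, for otherwise it would contradict the optimality of $\tau^*$, and so Proposition~\ref{prop:1} forces $\Delta\gamma(\tau,\bar\tau)>0$.

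The main obstacle is obtaining a \emph{uniform} positive lower bound on $\Delta\gamma(\tau,\bar\tau)$ over this bad set, so that a single value of $\alpha$ will work against all such $\tau$ simultaneously. Here I would exploit the positivity of the edge weights: setting $w_{\min}=\min_{e\in\mathcal{E}}w(e)>0$, any $\tau$ with $W(\tau)<W(\tau^*)$ has fewer than $W(\tau^*)/w_{\min}$ edges, and because $\mathcal{V}$ is finite there are only finitely many such node sequences. Consequently, the set
\[
\{\Delta\gamma(\tau,\bar\tau):\tau\in\mathcal{P},\,W(\tau)<W(\tau^*),\,\gamma(\tau)\ne\gamma(\bar\tau)\}
\]
is a finite subset of $(0,\infty)$ and therefore attains a positive minimum, call it $d_{\min}$.

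To close the argument I would pick any $\alpha\ge W(\tau^*)/d_{\min}$. For every path $\tau$ in the remaining case,
\[
C^\alpha(\tau,\bar\tau)=W(\tau)+\alpha\,\Delta\gamma(\tau,\bar\tau)\ge 0+\alpha d_{\min}\ge W(\tau^*)=C^\alpha(\tau^*,\bar\tau),
\]
which combined with the easy half shows that $\tau^*$ also minimizes \ref{P2}. The only genuinely non-obvious step is the finiteness argument that yields $d_{\min}>0$; the rest is bookkeeping around the definition of $C^\alpha$.
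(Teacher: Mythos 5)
Your proposal is correct and follows essentially the same exact-penalty argument as the paper: both observe that $C^\alpha(\tau^*,\bar\tau)=W(\tau^*)$ via Proposition~\ref{prop:1} and then choose $\alpha$ larger than a ratio of length gaps to projection differences over the finitely many ``shorter but non-homologous'' competitors. The only difference is that you explicitly justify the finiteness of that competitor set (via $w_{\min}>0$ bounding the number of edges, hence of node sequences), a step the paper's proof leaves implicit when it assumes there are $m$ homology classes with shorter shortest paths and takes a maximum over them.
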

\begin{proof}
Let $\tau^{*}_1$ denote the shortest path from $v_i$ to $v_f$ and $\tau^*$ be a solution to \ref{P1} (i.e. $\tau^{*}$ is shortest path such that $\tau^{*} \sim \bar{\tau}$). 
Without loss of generality, assume that there are $m \geq 1$ distinct homology classes whose respective shortest paths $\tau^*_1, ..., \tau^{*}_m$ have shorter length than $\tau^*$ (i.e. $W(\tau^{*}_1) < ... < W(\tau^{*}_m) < W(\tau^*)$). 
Proposition \ref{prop:1} states that if $\tau^* \sim \bar{\tau}$ then $\gamma(\tau^*) = \gamma(\bar{\tau})$, hence the cost of $\tau^*$ in \ref{P2} is
 $$C^\alpha(\tau^*, \bar{\tau}) = W(\tau_i^*) + \alpha \Delta \gamma(\tau^*, \bar{\tau}) = W(\tau^*) + 0 = W(\tau^*)$$ which is independent of $\alpha$.
Path $\tau^*$ is thus a minimizer of \ref{P2} when $\alpha^*$ satisfies $C^{\alpha^*}(\tau^*, \bar{\tau}) < C^{\alpha^*}(\tau^{*}_i, \bar{\tau})$ for all $\tau^{*}_i$, in particular, when
\begin{align}    
\alpha^* > \max \ \{\alpha_i\}_{i=1}^{m} = \max{} \left\{ \frac{W(\tau^*) - W(\tau^{*}_i)}{\Delta \gamma(\tau^{*}_i, \bar{\tau})} \right\}_{i=1}^{m}. \label{eq:alphas}
\end{align}
\end{proof}

We say \ref{P2} has a soft homology constraint as it penalizes paths in homology classes that are ``farther away" from the desired homology class, as measured by the projection difference $\Delta \gamma$.

\section{Methods} \label{sec:4}
\subsection{The $\mathcal{H}^*$ Algorithm} \label{sec:4A}
We introduce $\Hstar$, a homology-informed heuristic algorithm best-first search algorithm providing approximate solutions to \ref{P2}.
$\mathcal{H}^*$ is essentially $A^*$ \cite{astar} with a heuristic that measures the harmonic projection difference between $\bar{\tau}$ and a partial path from $v_i$ to a node $\tau^{(k)}$.
In particular, the cost of a node $\tau^{(k)}$ connected to $v_i$ by path $\tau_k = (\tau^{(0)}, ..., \tau^{(k)})$ is $W(\tau_k) + \alpha \Delta \gamma(\tau_k, \bar{\tau})$.
The parameter $\alpha$ thus controls the extent to which the projection difference contributes to the overall cost at a node. 
Note that when the destination node is reached, its cost coincides with the cost function in \ref{P2}.

The $\mathcal{H}^*$ heuristic was inspired by the equivalence of homologous paths in harmonic projection (Proposition \ref{prop:1}) and an observation about the similarity of sequential structure of paths within a homology class, as can be seen in Fig. \ref{fig:example}(d).
Notice, in particular, that over each edge in a path in Fig. \ref{fig:example}(d), the projection difference tends to decrease until the destination is reached.

\begin{figure*} 
\centering
\includegraphics[width=16.5 cm, height=4.5cm]{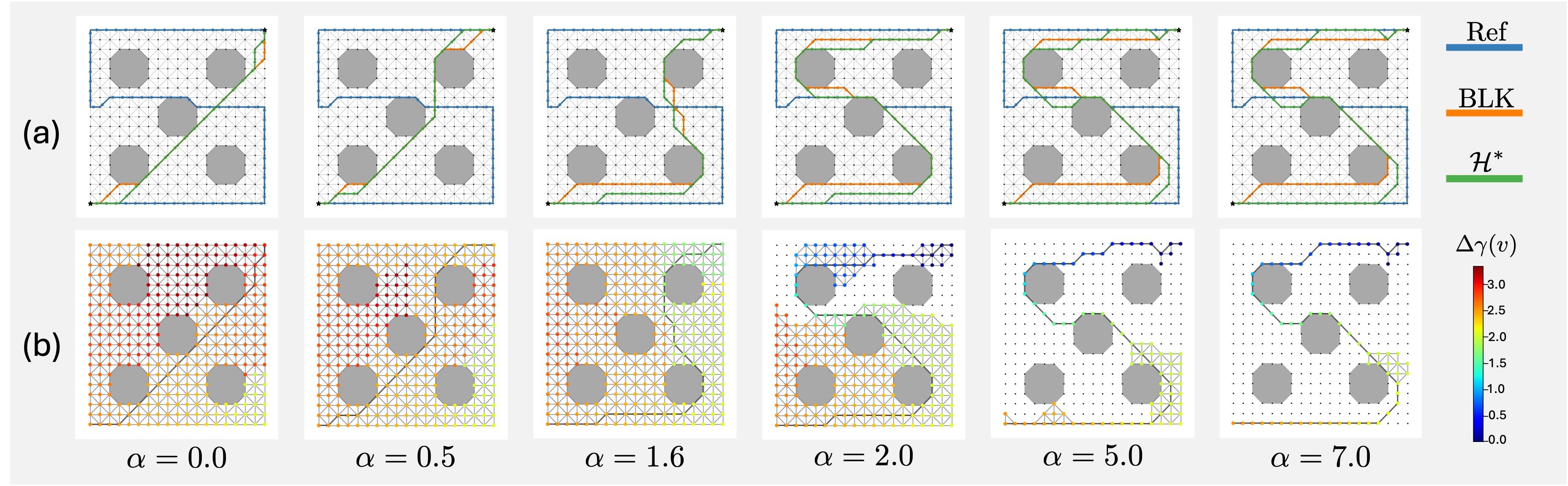}
\caption{(a) Paths produced by $\Hstar$ for various $\alpha$ and (b) the projection difference of all visited nodes after a path has been found.}
\label{fig:pedagogy}
\vspace{-5 mm}
\end{figure*}

We now describe the parameters and steps of the $\Hstar$ algorithm.
At a stage $k$, a set $A_k$ of visited nodes is maintained; we denote the set of unvisited nodes by  $A_k^c = \V \setminus A_k$.
Each node $v\in \V$ is associated with: i) a weight $W_k(v)$, ii) harmonic projection $\gamma_k(v)$, iii) cost $C^\alpha_k(v) = W_k(v) + \alpha \Delta \gamma_k(v)$ where $\Delta \gamma_k(v) = \lVert \gamma_k(v) - \bar{\gamma} \rVert_2$, and iv) the node $\text{prev(v)}$ preceding $v$ in the path from $v_i$ to $v$.

At stage $0$, we initialize the aforementioned data to $W_0(v) = \mathbb{I}(v, v_i), \gamma_0(v) = \mathbf{0}_D \in \mathbb{R}^D, C^\alpha_0(v) = W_0(v) + \alpha \Delta \gamma_0(v)$, and $\text{prev} = \emptyset$ where $\mathbb{I}(u,v) = 0$ if $u = v$, and $\infty$, otherwise, and $\emptyset$ means that a node has no predecessor.

For $k>0$, we compute the node in $A_{k-1}^c$ with least cost, 
\begin{equation}    
v_k^* = \argmin_{v \in A_{k-1}^c} 
C^\alpha_{k-1}(v),
\end{equation}
add it to the visited node set $A_k = A_{k-1} \cup \{v_k^*\}$, then update the cost of its neighbors, which we denote by $\mathcal{N}(v_k^*) = \{ u \in \V \ | \ (u,v_k^*) \in \E\}$.
In particular, the cost of a node $v \in \mathcal{N}(v_k^*) \cap A_k^c$ is updated according to 
\begin{align}
C^\alpha_k(v) & = \min \{C^\alpha_{k-1}(v), W'(v) + \alpha \Delta \gamma'(v)\} \\
W'(v) &= W_{k-1}(v_k^*) + w(v_k^*, v) \\
\Delta \gamma' (v) &= 
\lVert \gamma(v_k^*) + \gamma(v_k^*, v) -  \bar{\gamma} \rVert_2 
\end{align}
where, if the path through $v_k^*$ has lower cost, then $\text{prev}(v) = v_k^*$;
nodes outside of this set do not change at this stage.
The algorithm terminates when $v_k^* = v_f$, after which, the path from $v_i$ to $v_f$ is reconstructed using $\text{prev}$. 
We use $\Hstar(v_i, v_f, \tau)$ to denote the output of the $\Hstar$ algorithm.
\begin{figure*} 
    \centering
    \includegraphics[height=4.3 cm, width=16.5 cm]{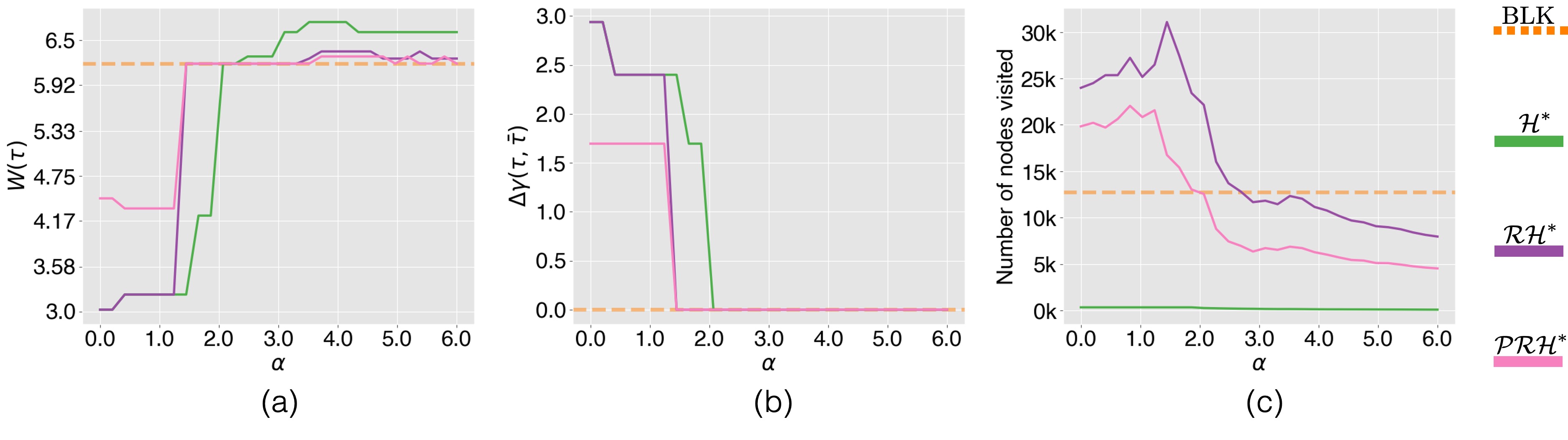} 
    \vspace{-3 mm}
    \caption{(a) Path length, (b) projection difference, and (c) number of nodes visited for $\mathcal{H}^*, \mathcal{RH}^*, \text{ and } \mathcal{PRH}^*$ as a function of $\alpha$. 
    The color-coded (and labeled) horizontal lines each correspond to a distinct homology class in (a) denoting the classes' shortest path length and (b) the projection difference. 
    The horizontal line in (c) shows the number of nodes visited by BLK.     
    \vspace{-5 mm}}
    \label{fig:alpha_sweep}
\end{figure*}

 \subsection{Fortified Rollout} \label{sec:4B}
Solutions to a combinatorial problem provided by a heuristic algorithm (not to be confused with `heuristic' in the $\text{A}^*$ sense) can be improved by embedding the heuristic in a rollout framework \cite{BertsekasRollout}.
Instead of solving for the entire path, as in \ref{P2}, rollout sequentially constructs the path using a heuristic. 
We detail how rollout is leveraged for our purposes.
At a non-terminal stage $k$, we maintain a path $\tau_k = (\tau^{(0)}, ..., \tau^{(k)})$, where $\tau^{(0)} = v_i$ and $\tau^{(k)} \neq v_f$. 
The path is updated by $\tau_{k+1} = \tau_k + (v_{k+1})$, where
\vspace{0 mm}
\begin{align}
v_{k+1} & = \argmin_{v \in \N(\tau^{(k)})}
C^{\alpha}(\tau_k + \Hstar(v,v_f,\bar{\tau}_k(v)), \bar{\tau}). \label{eq:rollout}
\vspace{-2 mm}
\end{align}
Because the source node for $\Hstar$ in Eq. \eqref{eq:rollout} changes at each stage, the reference path at stage $k$ for a node $v$ is $\bar{\tau}_k(v) = (v) -\tau_{k} + \bar{\tau}$. 

If a heuristic satisfies the sequential consistency property \cite[Definition 2.2]{BertsekasRollout}, rollout produces a path no worse than the one produces by the heuristic alone \cite[Proposition 3.2]{BertsekasRollout}.
Under mild conditions, a heuristic can be made sequentially consistent using the fortified rollout algorithm outlined in \cite[Section 4]{BertsekasRollout}.
The key difference between rollout and fortified rollout is that at every stage, the fortified variant maintains a full path $\tilde{\tau}_k$ from $v_i$ to $v_f$; 
in particular, continuing from Eq. \eqref{eq:rollout}, we have 
\begin{align}
\tilde{\tau}_{k+1} &= \argmin \{
C^{\alpha}(\tilde{\tau}_{k}, \bar{\tau}), 
C^{\alpha}(\tau'_{k+1}, \bar{\tau})
\} \\
\tau'_{k+1} &= \tau_k + \Hstar(v_{k+1}, v_f, \bar{\tau}_{k}(v_{k+1})) 
\end{align}
where $\tilde{\tau}_{0} = \Hstar(v_i, v_f, \bar{\tau})$.

While rollout can improve the performance of $\mathcal{H}^*$, it requires several uses of $\mathcal{H}^*$, thereby increasing the effective number of nodes visited to produce a result. 
We mitigate this by pruning nodes that increase the projection difference by more than some small value $\epsilon$
\begin{equation}
{\N}(v, \epsilon) = \{u \in \N(v) | \Delta \gamma(\tau_k + (u), \bar{\tau}) < \Delta \gamma(\tau_k, \bar{\tau}) + \epsilon\}.
\end{equation}
We use $\mathcal{RH}^*(v_i, v_f, \bar{\tau})$ and $\mathcal{PRH}^*(v_i, v_f, \bar{\tau}, \epsilon)$ to denote the output of fortified rollout without and with pruning, respectively.

\section{Numerical Analysis} \label{sec:5}

We show the utility of $\Hstar$ and its variants on various synthetic configuration spaces. 
Configuration spaces are constructed by triangulating a uniform grid of $19 \times 19$ of points on $[-1,1]^2  \subset \mathbb{R}^2$. 
Holes are created by removing all simplices within a specified  region and edge weights are set to the Euclidean distance between node positions. 
We construct the Hodge $1$-Laplacian $L_1$ of the configuration space and compute a basis for its null space to form $H = [h_1, ..., h_D]$.
Reference paths $\bar{\tau}$ are constructed by connecting set key points by shortest path.

For each experiment we compare the length and number of nodes visited for $\mathcal{H}^*$, $\mathcal{RH}^*$, $\mathcal{PRH}^*$ and BLK \cite{Bhattacharya_Likhachev_Kumar_2012} 
\footnote{Experiment code available here: \href{https://github.com/ctaveras1999/h-star}{https://github.com/ctaveras1999/h-star}}.
All experiments were run on a 1.1 GHz Quad-Core Intel Core i5 processor. 
Per node visit, on average, $\Hstar$, $\RHstar$ and $\PRHstar$ each take $0.001$ seconds while BLK takes $0.002$ seconds.

\subsection{Illustrative Example} \label{sec:5A}

We provide a practical example on an SC with five holes, consisting of $316$ nodes. 
Figure \ref{fig:pedagogy}(a) shows $\Hstar$ results for several $\alpha$ values (in green), given the S-shaped reference path (in blue) with source and destination nodes at the bottom-left and top-right, respectively. 
We plot the BLK path (in orange) that is homologous to the $\Hstar$ path for a given $\alpha$. 
Paths produced by $\Hstar$ for $\alpha = 0.0, 0.5, 1.6$ and $2.0$ belong to distinct homology classes, the last of which is homologous to $\bar{\tau}$.
For $\alpha \in \{0.0, 0.5, 1.6, 2.0 \}$, the $\Hstar$ and BLK paths are equal in length. 
To achieve this, $\Hstar$ visits $315, 315, 315$ and $251$ nodes, respectively, whereas BLK visits $14,269$ nodes – a two order of magnitude difference in node visits to achieve the same results.

In addition to efficiency gains, $\Hstar$ provides an interpretable interface for producing paths that are increasing similar to $\bar{\tau}$, as measured by $\Delta \gamma(\tau, \bar{\tau})$, as $\alpha$ increases. 
This ordering allows users to use $\alpha$ to trade off between path length and topological similarity to the reference path. 
The $\Hstar$ path for $\alpha=1.6$ has length $4.2$ whereas the solution to \ref{P1}, which is achieved by $\Hstar$ for $\alpha=2$, has length $6.2$. 
If the user is able to ignore the loop around the top left hole, they are provided a similar, but much shorter route, to the destination.

For $\alpha \in \{5, 7\}$, the $\Hstar$ paths are homologous to $\bar{\tau}$ both with length $6.6$, providing examples of non-optimal paths produced by $\Hstar$.
These results are achieved after visiting only $86$ and $63$ nodes, respectively, which is about a quarter of the SC's $316$ nodes and two orders of magnitude fewer than BLK. 
Such a trade-off can be justified for applications in which the speed of a planner is critical, as is the case in many real-time systems.

The larger $\alpha$ is, the earlier $\Hstar$ visits nodes with smaller projection difference. 
Due to this and the sequential structure of the harmonic projections (e.g., Fig. \ref{fig:example}(d)), $\Hstar$ builds paths to nodes that are closer in harmonic projection at the expense of optimality in path length. 
Figure \ref{fig:pedagogy}(b) demonstrates this phenomenon, showing that the harmonic projection difference decreases (or remains unchanged) at each visited node, as $\alpha$ goes from $0$ to $2.0$. 
Increasing $\alpha$ arbitrarily promotes increasingly greedy behavior towards minimizing projection difference.
This behavior can be seen in Fig. \ref{fig:pedagogy}(b) as $\alpha$ transitions from $2.5$ to $5.0$ to $7.0$.

\subsection{Characterizing the Effect of \textalpha}
In this next experiment, we aim to characterize the effect of the parameter $\alpha$ on the behavior of $\Hstar$, $\RHstar$, and $\PRHstar$.
The SC and reference path in this experiment are identical to that of the first (i.e. Fig. \ref{fig:pedagogy}(a)).
We sample thirty evenly spaced points from $0$ to $6$ for $\alpha$ and use them to produce paths with $\Hstar$ (in green), $\RHstar$ (in purple) and $\PRHstar$ (in pink). 
Figure \ref{fig:alpha_sweep} (a), (b) and (c), respectively plot the length, projection difference, and number of nodes visited by the algorithms.
The (orange) horizontal lines in Fig. \ref{fig:alpha_sweep} correspond to the output of BLK (which is independent of $\alpha$).

For $0 \leq \alpha \leq 2$, the $\Hstar$, $\RHstar$, and $\PRHstar$ results are stable (i.e. remain in the same homology class) over several ranges, eventually jumping to homology classes with smaller harmonic projection difference, at the cost of increased path length.
Moreover, for $\alpha < 2$, as shown in Fig. \ref{fig:pedagogy}(a), $\Hstar$ produces a shortest path in each homology class leading up to the desired one.
Around $\alpha = 2$, each of the algorithms produce a shortest path in the desired homology class.
Figure \ref{fig:alpha_sweep}(c) shows that, across $\alpha$, $\Hstar$ visits the least amount of nodes among all methods ($315$ at most), while achieving results comparable to BLK for each homology class $\Hstar$ produces solutions to.
From $\alpha < 2$, BLK requires the second least amount of node visits at about $19$k, followed by $\PRHstar$, then $\RHstar$ ranging between $30$k and $15$k. 
As $\alpha$ increases, the number of nodes visited by $\PRHstar$ and $\RHstar$ trends downwards, requiring less node visits than BLK for $\alpha > 2$. 
The reason for this is explained in the previous experiment and can be seen in Fig. \ref{fig:pedagogy}(b).

Importantly, $\RHstar$ and $\PRHstar$ produce shorter length paths than $\Hstar$ does for all $\alpha$ in which the algorithm outputs are homologous, as can be seen in Fig. \ref{fig:alpha_sweep}(a), demonstrating the utility of the rollout procedure. 
Finally, Fig. \ref{fig:alpha_sweep}(c) shows that node pruning, as proposed in Sec. \ref{sec:4B}, can reduce the number of nodes visited when using rollout. 

\subsection{Characterizing the Effect of the Number of Holes}
This final experiment studies how the number of holes in an SC  affects the number of nodes each algorithm visits to produce a path homologous to the reference.
We show that the number of nodes visited by BLK can scale with the number of holes, while our algorithms do not.

Towards this, we generate nine SCs, with between one and nine holes each with fixed hole location and reference path.
Examples of these SCs and reference paths can be seen in Fig. \ref{fig:more_holes}(c). 
Figure \ref{fig:more_holes}(a) and (b) respectively show the path lengths and number of node visits for each algorithm.
For $\Hstar, \RHstar$ and $\PRHstar$, the paths were produced by sampling twenty $\alpha$ values between $0$ and $3$, and choosing the path in the desired homology class with shortest length. 

In all cases, the $\Hstar$ algorithms produce optimal shortest paths while maintaining a relatively constant number of node visits for each SC.
For the first four SCs, $\Hstar$ and BLK visit a comparable amount of nodes, while $\RHstar$ and $\PRHstar$ visit significantly more.
For each hole added after the fourth, the number of nodes visited by BLK increases almost exponentially. 
This is because each time a hole is added, short paths that had previously been homologous no longer are.
Consequently, the number of homologically distinct paths between $v_i$ and $v_f$ that have shorter length than that of the desired path can increase as the number of holes increases.
For instance, in the five, six, and seven hole cases, BLK respectively produces $102, 252,$ and $675$ homologically distinct shortest paths before producing the desired path.

The $\Hstar$ algorithm, on the other hand, does not scale with the number of holes as it visits each node once, at most.
Moreover, $\RHstar$ and $\PRHstar$ scale with the number of nodes in a path and the size of the nodes' neighborhoods, which is unrelated to the number of holes.
As such, the $\Hstar$-based algorithms can be used as efficient alternatives to BLK in many-obstacled environments.

\begin{figure} 
    \centering
    \includegraphics[width=8.4 cm, height=4.5 cm]{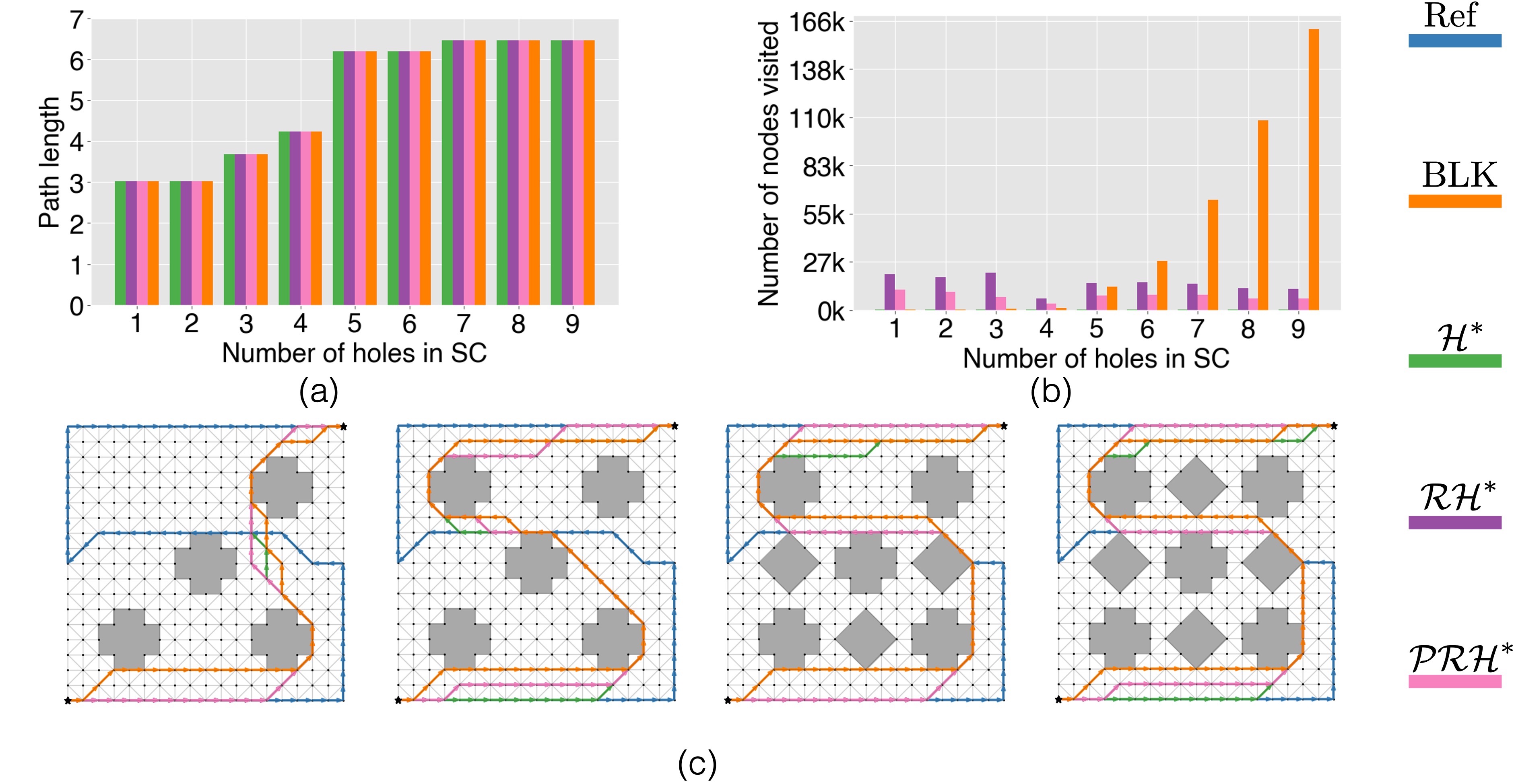}
    \vspace{-4 mm}
    \caption{(a) The length of paths produced by algorithms for each SC, (b) number of nodes visited to produce the paths, and (c) example paths.} \label{fig:more_holes}
\end{figure}

\section{Conclusion} \label{sec:6}
We present $\Hstar$, an efficient heuristic algorithm for solving a relaxation of the problem of finding the shortest path homologous to some user-provided reference path. 
We show that $\Hstar$ can be useful for suggesting paths in homology classes that are similar to, but with shorter shortest path than, the reference class. 
$\RHstar$ and $\PRHstar$ are introduced as rollout-based variants of $\Hstar$ that can improve its results.
Experiments demonstrate that $\Hstar$ can produce results that are often comparable to  \cite{Bhattacharya_Likhachev_Kumar_2012} and at a significantly reduced computational cost, especially for environments with many obstacles or holes present.

\bibliographystyle{IEEEtran}
\nocite{*}
\bibliography{refs}

\begin{thebibliography}{10}
\providecommand{\url}[1]{#1}
\csname url@samestyle\endcsname
\providecommand{\newblock}{\relax}
\providecommand{\bibinfo}[2]{#2}
\providecommand{\BIBentrySTDinterwordspacing}{\spaceskip=0pt\relax}
\providecommand{\BIBentryALTinterwordstretchfactor}{4}
\providecommand{\BIBentryALTinterwordspacing}{\spaceskip=\fontdimen2\font plus
\BIBentryALTinterwordstretchfactor\fontdimen3\font minus
  \fontdimen4\font\relax}
\providecommand{\BIBforeignlanguage}[2]{{%
\expandafter\ifx\csname l@#1\endcsname\relax
\typeout{** WARNING: IEEEtran.bst: No hyphenation pattern has been}%
\typeout{** loaded for the language `#1'. Using the pattern for}%
\typeout{** the default language instead.}%
\else
\language=\csname l@#1\endcsname
\fi
#2}}
\providecommand{\BIBdecl}{\relax}
\BIBdecl

\bibitem{LaValle_2006}
S.~M. LaValle, \emph{Planning Algorithms}.\hskip 1em plus 0.5em minus
  0.4em\relax Cambridge University Press, 2006.

\bibitem{Choset_Lynch_Hutchinson_Kantor_Burgard_2005}
H.~Choset, K.~Lynch, S.~Hutchinson, G.~Kantor, and W.~Burgard, \emph{Principles
  of Robot Motion: Theory, Algorithms, and Implementations}, ser. Intel. Rob.
  and Aut. Agents series.\hskip 1em plus 0.5em minus 0.4em\relax MIT Press,
  2005.

\bibitem{Wu_Bhattacharya_Prorok_2020}
W.~Wu, S.~Bhattacharya, and A.~Prorok, ``Multi-robot path deconfliction through
  prioritization by path prospects,'' in \emph{2020 IEEE Int. Conf. on Rob. and
  Aut. (ICRA)}, 2020, pp. 9809--9815.

\bibitem{Kolur_Chintalapudi_Boots_Mukadam_2019}
K.~Kolur, S.~Chintalapudi, B.~Boots, and M.~Mukadam,
  ``\BIBforeignlanguage{en}{Online motion planning over multiple homotopy
  classes with gaussian process inference},'' in
  \emph{\BIBforeignlanguage{en}{2019 IEEE/RSJ Int. Conf. on Intel. Rob. and
  Sys. (IROS)}}.\hskip 1em plus 0.5em minus 0.4em\relax Macau, China: IEEE,
  Nov. 2019, p. 2358–2364.

\bibitem{Hernandez_Carreras_Ridao_2011}
E.~Hernandez, M.~Carreras, and P.~Ridao, ``\BIBforeignlanguage{en}{A path
  planning algorithm for an auv guided with homotopy classes},''
  \emph{\BIBforeignlanguage{en}{Proc. of the Int. Conf. on Aut. Planning and
  Scheduling}}, vol.~21, p. 82–89, Mar. 2011.

\bibitem{hatcher}
A.~Hatcher, \emph{Algebraic Topology}.\hskip 1em plus 0.5em minus 0.4em\relax
  Cambridge: Cambridge University Press, 2002.

\bibitem{Bhattacharya_Likhachev_Kumar_2012}
S.~Bhattacharya, M.~Likhachev, and V.~Kumar,
  ``\BIBforeignlanguage{en}{Topological constraints in search-based robot path
  planning},'' \emph{\BIBforeignlanguage{en}{Aut. Rob.}}, vol.~33, no.~3, p.
  273–290, Oct 2012.

\bibitem{HERSHBERGER199463}
J.~Hershberger and J.~Snoeyink, ``Computing minimum length paths of a given
  homotopy class,'' \emph{Comp. Geo.}, vol.~4, no.~2, pp. 63--97, 1994.

\bibitem{chazelle}
B.~Chazelle, ``A theorem on polygon cutting with applications,'' in \emph{23rd
  Ann Symp. on Found. of Comp. Sci. (sfcs 1982)}, 1982, pp. 339--349.

\bibitem{Lee1984EuclideanSP}
D.-T. Lee and F.~P. Preparata, ``Euclidean shortest paths in the presence of
  rectilinear barriers,'' \emph{Networks}, vol.~14, pp. 393--410, 1984.

\bibitem{tightening}
E.~C. de~Verdi\`{e}re and J.~Erickson, ``Tightening nonsimple paths and cycles
  on surfaces,'' \emph{SIAM Journal on Computing}, vol.~39, no.~8, pp.
  3784--3813, 2010.

\bibitem{Chambers_Erickson_Nayyeri_2009}
E.~W. Chambers, J.~Erickson, and A.~Nayyeri, ``Minimum cuts and shortest
  homologous cycles,'' in \emph{Proc. of the 25th Ann Symp. on Comp. Geo.},
  ser. SCG '09.\hskip 1em plus 0.5em minus 0.4em\relax New York, NY, USA: ACM,
  2009, p. 377–385.

\bibitem{optimal_homologous_cycles_lp}
T.~K. Dey, A.~N. Hirani, and B.~Krishnamoorthy, ``Optimal homologous cycles,
  total unimodularity, and linear programming,'' \emph{SIAM Journal on
  Computing}, vol.~40, no.~4, pp. 1026--1044, 2011.

\bibitem{Dey_Li_Wang_Minimal}
T.~K. Dey, T.~Li, and Y.~Wang, ``Efficient algorithms for computing a minimal
  homology basis,'' in \emph{LATIN 2018: Theoretical Informatics}.\hskip 1em
  plus 0.5em minus 0.4em\relax Springer Int. Publishing, 2018, pp. 376--398.

\bibitem{Chen_Freedman_2010}
C.~Chen and D.~Freedman, ``\BIBforeignlanguage{en}{Measuring and computing
  natural generators for homology groups},''
  \emph{\BIBforeignlanguage{en}{Comp. Geo.}}, vol.~43, no.~2, p. 169–181,
  2010.

\bibitem{Bhattacharya_Lipsky_Ghrist_Kumar_2013}
S.~Bhattacharya, D.~Lipsky, R.~Ghrist, and V.~Kumar,
  ``\BIBforeignlanguage{en}{Invariants for homology classes with application to
  optimal search and planning problem in robotics},''
  \emph{\BIBforeignlanguage{en}{Annals of Mathematics and Artificial
  Intelligence}}, vol.~67, no. 3–4, p. 251–281, Mar 2013.

\bibitem{Demyen}
D.~Demyen and M.~Buro, ``Efficient triangulation-based pathfinding,'' in
  \emph{AAAI Conf. on Artificial Intelligence}, 2006.

\bibitem{Kuderer_Sprunk_Kretzschmar_Burgard_2014}
M.~Kuderer, C.~Sprunk, H.~Kretzschmar, and W.~Burgard,
  ``\BIBforeignlanguage{en}{Online generation of homotopically distinct
  navigation paths},'' in \emph{\BIBforeignlanguage{en}{2014 IEEE Int. Conf. on
  Rob. and Automation (ICRA)}}.\hskip 1em plus 0.5em minus 0.4em\relax Hong
  Kong, China: IEEE, May 2014, p. 6462–6467.

\bibitem{Yi_Goodrich_Seppi_2016}
D.~Yi, M.~A. Goodrich, and K.~D. Seppi, ``Homotopy-aware rrt*: Toward
  human-robot topological path-planning,'' in \emph{2016 11th ACM/IEEE Int.
  Conf. on Human-Robot Interaction (HRI)}, 2016, pp. 279--286.

\bibitem{Schmitzberger_Bouchet_Dufaut_Wolf_Husson_2002}
E.~Schmitzberger, J.~Bouchet, M.~Dufaut, D.~Wolf, and R.~Husson,
  ``\BIBforeignlanguage{en}{Capture of homotopy classes with probabilistic road
  map},'' in \emph{\BIBforeignlanguage{en}{IEEE/RSJ Int. Conf. on Intel. Rob.
  and System}}, vol.~3.\hskip 1em plus 0.5em minus 0.4em\relax Lausanne,
  Switzerland: IEEE, 2002, p. 2317–2322.

\bibitem{Sandstrom_Uwacu_Denny_Amato_2020}
R.~Sandstrom, D.~Uwacu, J.~Denny, and N.~M. Amato,
  ``\BIBforeignlanguage{en}{Topology-guided roadmap construction with dynamic
  region sampling},'' \emph{\BIBforeignlanguage{en}{IEEE Rob. and Aut.
  Letters}}, vol.~5, no.~4, p. 6161–6168, Oct. 2020.

\bibitem{ghrist2014elementary}
R.~W. Ghrist, \emph{Elementary Applied Topology}.\hskip 1em plus 0.5em minus
  0.4em\relax Createspace Seattle, 2014, vol.~1.

\bibitem{crane}
K.~Crane, F.~de~Goes, M.~Desbrun, and P.~Schröder, ``Digital geometry
  processing with discrete exterior calculus,'' in \emph{ACM SIGGRAPH 2013
  courses}, ser. SIGGRAPH '13.\hskip 1em plus 0.5em minus 0.4em\relax New York,
  NY, USA: ACM, 2013.

\bibitem{nanda21}
V.~Nanda, ``Computational algebraic topology lecture notes,'' \emph{URL:
  https://people.maths.ox.ac.uk/nanda/cat/TDANotes.pdf}, 2021.

\bibitem{Edelsbrunner_Harer}
H.~Edelsbrunner and J.~L. Harer, \emph{Computational topology: an
  introduction}.\hskip 1em plus 0.5em minus 0.4em\relax American Mathematical
  Society, 2022.

\bibitem{Grady_Polimeni_2010}
L.~J. Grady and J.~R. Polimeni, \emph{\BIBforeignlanguage{en}{Discrete
  Calculus}}.\hskip 1em plus 0.5em minus 0.4em\relax London: Springer London,
  2010.

\bibitem{Lim_2019}
L.-H. Lim, ``Hodge laplacians on graphs,'' \emph{SIAM Review}, vol.~62, no.~3,
  pp. 685--715, 2020.

\bibitem{strang}
G.~Strang, ``The fundamental theorem of linear algebra,'' \emph{The American
  Mathematical Monthly}, vol. 100, no.~9, pp. 848--855, 1993.

\bibitem{random_walks}
M.~T. Schaub, A.~R. Benson, P.~Horn, G.~Lippner, and A.~Jadbabaie, ``Random
  walks on simplicial complexes and the normalized hodge 1-laplacian,''
  \emph{SIAM Review}, vol.~62, no.~2, pp. 353--391, 2020.

\bibitem{Frantzen_outliers}
F.~Frantzen, J.-B. Seby, and M.~T. Schaub, ``Outlier detection for trajectories
  via flow-embeddings,'' in \emph{2021 55th Asilomar Conf. on Sig., Sys., and
  Comp.}\hskip 1em plus 0.5em minus 0.4em\relax IEEE, 2021, pp. 1568--1572.

\bibitem{ghosh_2018}
A.~Ghosh, B.~Rozemberczki, S.~Ramamoorthy, and R.~Sarkar, ``Topological
  signatures for fast mobility analysis,'' in \emph{Proc. of the 26th ACM
  SIGSPATIAL Int. Conf. on Adv. in Geo. Inf. Sys.}, 2018, pp. 159--168.

\bibitem{Schaub_higher_order}
M.~T. Schaub, Y.~Zhu, J.-B. Seby, T.~M. Roddenberry, and S.~Segarra,
  ``\BIBforeignlanguage{en}{Signal processing on higher-order networks:
  Livin’ on the edge... and beyond},'' \emph{\BIBforeignlanguage{en}{Signal
  Proc.}}, vol. 187, p. 108149, Oct 2021.

\bibitem{yin}
X.~Yin, C.-C. Ni, J.~Ding, W.~Han, D.~Zhou, J.~Gao, and X.~D. Gu,
  ``\BIBforeignlanguage{en}{Decentralized human trajectories tracking using
  hodge decomposition in sensor networks},'' in
  \emph{\BIBforeignlanguage{en}{Proc. of the 23rd SIGSPATIAL Int. Conf. on Adv.
  in Geo. Inf. Sys.}}\hskip 1em plus 0.5em minus 0.4em\relax Seattle
  Washington: ACM, Nov. 2015, p. 1–4.

\bibitem{roddenberry2019hodgenet}
T.~M. Roddenberry and S.~Segarra, ``{HodgeNet}: Graph neural networks for edge
  data,'' in \emph{Asilomar Conf. on Sig., Sys., and Comp.}, 2019, pp.
  220--224.

\bibitem{Roddenberry_Glaze_Segarra}
T.~M. Roddenberry, N.~Glaze, and S.~Segarra, ``Principled simplicial neural
  networks for trajectory prediction,'' in \emph{Proc. of the 38th Int. Conf.
  on Machine Learning}, ser. Proc. of Machine Learning Research, M.~Meila and
  T.~Zhang, Eds., vol. 139.\hskip 1em plus 0.5em minus 0.4em\relax PMLR, Jul
  2021, pp. 9020--9029.

\bibitem{jiagraph2019}
J.~Jia, M.~T. Schaub, S.~Segarra, and A.~R. Benson, ``Graph-based
  semi-supervised \& active learning for edge flows,'' in \emph{ACM SIGKDD Int.
  Conf. on Knowledge Discovery \& Data Mining}, 2019, p. 761–771.

\bibitem{astar}
P.~E. Hart, N.~J. Nilsson, and B.~Raphael, ``A formal basis for the heuristic
  determination of minimum cost paths,'' \emph{IEEE Trans. on Sys. Science and
  Cyb.}, vol.~4, no.~2, pp. 100--107, 1968.

\bibitem{bertsekas2014constrained}
D.~P. Bertsekas, \emph{Constrained optimization and Lagrange multiplier
  methods}.\hskip 1em plus 0.5em minus 0.4em\relax Academic press, 2014.

\bibitem{Bertsekas_2012}
------, \emph{\BIBforeignlanguage{eng}{Dynamic Programming and Optimal
  Control}}, 4th~ed., ser. Athena scientific optimization and computation
  series.\hskip 1em plus 0.5em minus 0.4em\relax Athena Scientific, 2012.

\bibitem{BertsekasRollout}
------, ``Rollout algorithms for constrained dynamic programming,'' \emph{Lab.
  for Inf. and Dec. Sys. Report}, vol. 2646, 2005.

\end{thebibliography}

\end{document}